\newtheorem{theorem}{Theorem}[section]
\newcommand{\brac}[1]{\left(#1\right)}
\newcommand{\beq}[1]{\begin{equation}\label{#1}}
\newcommand{\eeq}{\end{equation}}
\newcommand{\set}[1]{\left\{#1\right\}}
\def\E{\mbox{{\bf E}}}
\def\Pr{\mbox{{\bf Pr}}}
\newcommand{\ignore}[1]{}
\def\l{\lambda}
\def\r{\rho}
\def\z{\zeta}
\def\d{\delta}
  \def\d{\delta} 
\def\e{\varepsilon}    
\def\z{\zeta}     \def\l{\lambda}
 \def\m{\mu}  
\def\r{\rho}
\def\E{\mbox{{\bf E}}}
\def\Pr{\mbox{{\bf Pr}}}
\begin{document}

\title{A Random Variant of the Game of Plates and Olives}

\author{Andrzej Dudek}
\address{Department of Mathematics, Western Michigan University, Kalamazoo, MI}
\email{\tt andrzej.dudek@wmich.edu}
\thanks{The first author was supported in part by a grant from the Simons Foundation (522400, AD)}

\author{Sean English}
\address{Department of Mathematics, Western Michigan University, Kalamazoo, MI}
\email{\tt sean.j.english@wmich.edu}

\author{Alan Frieze}
\address{Department of Mathematical Sciences, Carnegie Mellon University, Pittsburgh, PA}
\email{\tt alan@random.math.cmu.edu}
\thanks{The third author was supported in part by NSF grant DMS1661063}

\begin{abstract}
The game of plates and olives was originally formulated by Nicolaescu and encodes the evolution of the topology of the sublevel sets of Morse functions.
We consider a random variant of this game. The process starts with an empty table. There are four different types of moves: (1) add a new plate to the table, (2) combine two plates and their olives onto one plate, removing the second plate from the table, (3) add an olive to a plate, and (4) remove an olive from a plate. We show that with high probability the number of olives is linear as the total number of moves goes to infinity. Furthermore, we prove that the number of olives is concentrated around its expectation.
\end{abstract}

\maketitle

\section{Introduction}

The game of plates and olives is a purely combinatorial process that has an interesting application to topology and Morse theory. Morse theory involves the study of topological manifolds by considering the smooth functions on the manifolds. An \emph{excellent Morse function} on the $2$-sphere is a smooth function from $S^2\to \mathbb{R}$ such that all the critical points are non-degenerate (i.e. the matrix of second partial derivatives is non-singular) and take distinct values. 

If $f$ is an excellent Morse function on the sphere, $S^2$, with critical points $x_1,\dots,x_m$ with $f(x_1)<\dots<f(x_m)$, a \emph{slicing} of $f$ is an increasing sequence $a_0,\dots,a_m$ such that $a_0<f(x_1)<a_1<f(x_2)<\dots<a_{m-1}<f(x_m)<a_m$. Then two excellent Morse functions $f$ and $g$, with the same number of critical points, are said to be \emph{topologically equivalent} if for any slicing $a_0,\dots,a_m$ of $f$ and $b_0,\dots,b_m$ of $g$, there is an order-preserving diffeomorphism (i.e. an isomorphism of smooth manifolds) between the sublevel sets $\{x\in S^2\mid f(x)\leq a_i\}$ and $\{x\in S^2\mid g(x)\leq b_i\}$ for each $0\leq i\leq m$. 

Loosely speaking, two excellent Morse functions are topologically equivalent if when their critical values are ordered as mentioned above, both functions have the same types of critical points (in terms of being local minima, maxima, or saddle points), appearing in the same order, and in a rough sense the same location relative to other critical values, which is necessary for the sublevel sets described above to diffeomorphic.

Morse functions on the sphere have exactly $2n+2$ critical points, $n$ of which are saddle points. It was shown in \cite{N} that the sublevel sets $\{f(x)\leq a\}$ are topologically equivalent to either all of $S^2$, or a finite (possibly empty) disjoint union of disks, each with at most a finite number of punctures (i.e. isolated ``missing'' points). As the value $a$ crosses a critical point, one of the following four things will take place: 
\begin{enumerate}[\qquad (1)]
\item a new disk may appear, 
\item two such disks may merge (preserving the punctures in both disks), 
\item a new puncture may appear, or 
\item a puncture may disappear. 
\end{enumerate}
Given a slicing $a_0,\dots,a_m$ of the excellent Morse function $f$, we have that the first sublevel set, $\{f(x)\leq a_0\}=\emptyset$, and the second, $\{f(x)\leq a_1\}$ is a disk. The second to last sublevel, $\{f(x)\leq a_{m-1}\}$ is also a disk, and the last sublevel set, $\{f(x)\leq a_m\}$, is the entire sphere, and this is the only sublevel set that is topologically equivalent to the sphere.

The game of plates and olives was originally formulated by Nicolaescu in \cite{N} and encodes the evolution of the topology of the sublevel sets in a purely combinatorial process in which plates play the role of disks and olives represent punctures in the disks. The moves in the game of plates and olives are designed to resemble exactly the possible transformations that happen when $a$ crosses a critical point: 
\begin{enumerate}[\qquad (1)]
\item add a plate, 
\item combine two plates while keeping all the olives, 
\item add an olive to a plate, or
\item remove an olive from a plate. 
\end{enumerate} 
The game of plates and olives begins at an empty table and ends the first time we return to an empty table, signifying that the level sets of a Morse function start with the empty set and end with the entire sphere.

Let $T^2_n$ denote the number of excellent Morse functions on the $2$-sphere with $n$ saddle points, up to topological equivalence. A lower bound for $T^2_n$ was given by Nicolaescu in \cite{N} by studying walks on Young's lattice. An upper bound on $T^2_n$ was given by Carroll and Galvin in \cite{CG} from studying the game of plates and olives directly. The bounds of these two papers give
\[
(2/e)^{n+o(n)}n^n\leq T^2_n\leq (4/e)^{n+o(n)}n^n.
\]

Here we will study a random variant of the game of plates and olives. 

\section*{The Model}

The process starts with an empty table. There are four different types of moves that can happen in the process. The four main moves are as follows:

\begin{enumerate}
\item[($P^+$)] Add a plate; from every configuration we can add one empty plate to the table.
\item[($P^-$)] Combine two plates; assuming there are at least two plates, we can choose two of them (order does not matter), combine their olives onto one plate, and remove the other plate from the table.
\item[($O^+$)] Add an olive to a plate; we can choose any plate and add one olive to it.
\item[($O^-$)] Remove an olive from a plate; we can choose any non-empty plate and remove one olive from it.
\end{enumerate}

In our model, the plates are distinguishable, but the olives are not. At each time step in the process, one of the available moves will be chosen to be performed uniformly at random.

In addition to the random aspect, our model differs from the game of plates and olives only in that in our model, the plates are distinguishable, and we do not allow for the process to return to an empty table. 

Our main result shows that the number of olives grows linearly with the number of steps in the process, and that the number of olives is concentrated. When we refer to an event occurring with high probability (w.h.p. for short), we mean that the probability of that event goes to $1$ as the total number of moves, $t$, goes to infinity.

\begin{theorem}\label{theorem main thorem}
Let $O_t$ be the total number of olives on the table in the preceding model at time $t$. 
\begin{enumerate}[(a)]
\item There exist absolute constants $C>0$ and $\frac{1}{342}\leq c_1\leq c_2\leq \frac23$ such that
\begin{equation}\label{thm:eq1}	
\Pr(c_1t\leq O_t \leq c_2t) \ge 1 - e^{-Ct}.
\end{equation}
\item Furthermore, there exists an absolute constants $A>0$ such that for every $\d\ge 0$ we have
\begin{equation}\label{thm:eq2}	
\Pr(|O_t-\E(O_t)|\geq \d t )\leq e^{-A\d^2t}
\end{equation}
\item Also, w.h.p. no plate, except for the first plate, has more than $B\log t$ olives at any time, for some absolute constant $B>0$.
\end{enumerate}
\end{theorem}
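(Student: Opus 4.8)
The plan is to analyze the process as a discrete-time Markov chain and exploit the fact that each of the four moves is chosen uniformly at random from those currently available. Write $p_t$ for the number of plates and $O_t$ for the number of olives at time $t$. The number of available moves is controlled: $P^+$ and $O^+$ are always available, $P^-$ is available exactly when $p_t \ge 2$, and the number of distinct $O^-$ moves equals the number of non-empty plates, which is at most $\min\{p_t, O_t\}$; similarly the number of $O^+$ moves is exactly $p_t$ and the number of $P^-$ moves is $\binom{p_t}{2}$. So the \emph{total} number of available moves at time $t$ is $\Theta(p_t^2)$ once $p_t \ge 2$, and of these a $\Theta(1/p_t)$-fraction increase the olive count by one while a $\Theta(1/p_t)$-fraction decrease it. This already suggests the olive count performs a near-balanced random walk unless some structural feature biases it; the content of part (a) is to show the bias is bounded away from zero in the increasing direction (for the lower bound $c_1 t$) while part (c) will be needed to control the competing $O^-$ moves.

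For part (a), first I would establish a crude lower bound on the number of plates: since $P^+$ is always one of $O(p_t^2)$ available moves, $p_t$ grows, but slowly; more useful is that the number of plates is at least some constant fraction of... actually the cleanest route is a direct drift/supermartingale argument on $O_t$ together with the Azuma–Hoeffding inequality. Since $O_{t+1} - O_t \in \{-1,0,+1\}$ deterministically (a bounded-difference process), part (b) with $\E(O_t)$ replaced by a suitable reference line follows immediately from Azuma once we pin down the expected one-step drift. For the lower bound $O_t \ge c_1 t$, I would show that conditioned on the history, $\Pr(O_{t+1}-O_t = +1 \mid \mathcal F_t) - \Pr(O_{t+1}-O_t=-1\mid \mathcal F_t) \ge \text{const} > 0$ whenever, say, $O_t \le (2/3) t$; the key observation is that an $O^-$ move requires a non-empty plate and is in competition against $\binom{p_t}{2}$ merge moves plus $p_t$ add-olive moves, so when there are many plates the olive-removal probability is small, giving a positive drift. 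Carrying this out rigorously requires lower-bounding $p_t$, which I would do by noting $P^+$ contributes a guaranteed inflow of plates and $P^-$ removes at most one per step, and using another Azuma argument on $p_t$. Combining a uniform positive-drift lower bound with Azuma (via the standard trick of comparing to a biased random walk and using a supermartingale for the deviation below the mean) yields $\Pr(O_t < c_1 t) \le e^{-Ct}$; the upper bound $O_t \le (2/3)t$ is even easier since the total number of olive-increasing moves up to time $t$ is at most $t$ and a two-thirds fraction of all moves... more carefully, $O^+$ moves constitute at most a bounded fraction when $p_t$ is large, but the truly simple bound is that $O_t \le (2/3)t$ holds deterministically is false, so one again uses a one-sided Azuma bound against the expected drift, which is at most $2/3$ because among the moves that change $O_t$ at most... here I would just verify the one-step expected increment is bounded above by $2/3$ and invoke Azuma.

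Part (c) is the part I expect to be the main obstacle, and it is genuinely different in flavour because it concerns the \emph{maximum load} of a plate rather than an aggregate. The idea is that each individual plate (other than the first, which can accumulate olives from merges early on before many plates exist) behaves, in terms of its own olive count, like a random walk that is only occasionally ``touched'': at a given step, a specific plate is selected for an $O^+$ or $O^-$ move with probability $O(1/p_t^2)$, and it receives a dump of olives from a merge also with probability $O(1/p_t)$. Since $p_t = \Omega(\sqrt{t})$ w.h.p. (which I would prove as above, or even $p_t = \Omega(t)$ if the plate-count drift is positive — I would check which holds), the expected number of times any fixed plate is directly touched up to time $t$ is $O(\log t)$, and a Chernoff/union bound over the $O(t)$ plates shows no plate is touched more than $B \log t$ times w.h.p. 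The subtlety is the merge move: when two plates combine, the survivor inherits the loser's olives, so a plate's load is not just a function of its own direct touches but of the whole merge forest below it. To handle this I would set up an inductive/recursive bound: define the load of a plate as (direct olive additions) plus (loads of plates merged into it), and argue that the merge tree rooted at any plate has, w.h.p., only $O(\log t)$ total ``mass'' because merges are rare per plate; a union bound over plates and over time steps then gives the claim. The first plate is excluded precisely because at the very start $p_t$ is small (constant), so $1/p_t$ is not small and it can plausibly gather $\omega(\log t)$ olives during the initial phase.

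Overall, parts (a) and (b) are two applications of Azuma's inequality to the bounded-difference processes $O_t$ and $p_t$, with the main work being the elementary verification that the one-step drift of $O_t$ is bounded away from $0$ from below (using that plate count grows) and bounded above by $2/3$; the main obstacle is part (c), where the merging operation couples the loads of different plates and forces a recursive union-bound argument over the induced forest of merges rather than a single clean martingale estimate.
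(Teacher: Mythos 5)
Your proposal has two genuine gaps, both stemming from an incorrect picture of the plate dynamics.

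\textbf{Gap 1: the one-step olive drift is not bounded away from zero.} When there are $\ell$ plates of which $n$ are non-empty, the number of available moves is $1+\binom{\ell}{2}+\ell+n$, of which $\ell$ are $O^+$ moves and $n$ are $O^-$ moves. So
\[
\Pr(O_{t+1}-O_t=+1\mid\mathcal F_t)-\Pr(O_{t+1}-O_t=-1\mid\mathcal F_t)=\frac{\ell-n}{1+\binom{\ell}{2}+\ell+n}.
\]
This vanishes whenever all plates are non-empty, and even when positive it is $O(1/\ell)$, not a constant. The intuition that ``when there are many plates the olive-removal probability is small'' is true but unhelpful, because the olive-addition probability is equally small: both are $\Theta(1/\ell)$. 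So there is no uniform positive drift, and the Azuma argument for the lower bound $O_t\ge c_1 t$ does not get off the ground. The paper avoids this by a different mechanism: it tracks the olives on the single persistent plate, whose count has non-negative one-step drift from $O^\pm$ moves and \emph{strictly} gains olives from each merge with constant probability. The linear growth comes from the fact that the process returns to a single-plate state linearly often, and each such cycle deposits, with probability at least $1/6$, at least one olive that can never again leave that plate except via an $O^-$ move that is itself balanced by the corresponding $O^+$.

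\textbf{Gap 2: the plate count does not grow.} You propose $p_t=\Omega(\sqrt t)$ (or even $\Omega(t)$) w.h.p.\ as the basis for part (c). This is false: for $\ell\ge 2$ there are $\binom{\ell}{2}$ ways to merge and only one way to add a plate, so conditioned on a plate move the chain is strongly biased \emph{downward} (probability $\ge 3/4$ of decreasing when $\ell\ge 3$). The plate count is a positive-recurrent-type walk that keeps returning to $1$; it does not grow with $t$. As a consequence the ``touch rate'' of a fixed plate is not $O(1/p_t^2)\to 0$, so the Chernoff/union-bound framework for counting touches does not give $O(\log t)$ touches per plate. The recursive merge-tree argument you sketch is also unnecessary. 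The paper's proof of (c) is much shorter: since the return time to a single plate has an exponential tail (this is the content of the related Markov chain analysis), with high probability all of the $m$ inter-return intervals have length $O(\log t)$; any non-first plate exists entirely within one such interval, so it can hold at most $O(\log t)$ olives.

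The upper bound $O_t\le 2t/3$ and the concentration in part (b) can indeed be run through a drift/Azuma framework in principle, but the paper gets concentration by a cleaner route: partition time at the returns to one plate, note the increments $X_i=O_{t_{i+1}}-O_{t_i}$ are i.i.d.\ with positive mean and exponential tails, and apply a Chernoff-style MGF bound to the sum. You would still need the return-time analysis to make any version of (c) work, so the regeneration structure is not optional here.
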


We prove in Section~\ref{section expected number of olives} that
\begin{equation}\label{eq:eot}
1/342\leq \E(O_t)\leq 2/3.
\end{equation}
Next in Section~\ref{section concentration} we derive the concentration result~\eqref{thm:eq2}, which altogether will imply~\eqref{thm:eq1}. In Section~\ref{section markov chain} we consider an auxiliary Markov chain process. It follows from our proofs that constants $1/342$ and $2/3$ are not optimal. As a matter of fact a computer simulation suggests that the number of olives $O_t$ is concentrated around $c t$, where $c\approx 0.096$.

\section{Bounds on the expected number of olives}\label{section expected number of olives}

\subsection{Lower bound}

We would like to show that the number of olives at a given time grows linearly with time~$t$. Towards this, we will establish two facts:
\begin{itemize}
	\item we expect to return to a single plate a linear number of times, and
	\item each time we return to a single plate, we expect to gain a positive number of olives.
\end{itemize}
This will give us a linear expectation. 

Now let us show that we expect to return to a single plate a linear number of times. If we have $\ell\ge 1$ plates, then the probability we do a plate move is at least
\[
\frac{\binom{\ell}2+1}{2\ell+\binom{\ell}2+1}\geq 1/3.
\]
Let $t_{plate}$ be the random variable that counts the number of plate moves we have after $t$ moves overall. Then
\begin{equation}\label{equation expected number of plate moves}
\E(t_{plate})\geq\sum_{i=1}^t 1/3 = t/3.
\end{equation}
Now let us consider only plate moves to get a lower bound on the random variable $X$, which counts the number of times we transition from two plates to one plate.

We consider a related Markov chain. In this process, we will consider a random walk on the positive integers. We will start this walk at $1$ (plate). If we are currently at $1$, then we will move to $2$ with probability $1$. If we are currently at $2$, we will move to $1$ with probability $1/2$ and to $3$ with probability $1/2$. If we are at $k\geq 3$, we will move to $k-1$ with probability $3/4$ and to $k+1$ with probability $1/4$. This Markov chain will be indexed by time $t_{plate}$ as it only models moves made when there is at least one plate.

Observe that in our model, for $\ell\geq 3$,
\begin{equation}\label{eq:rm_plate}
\Pr(P^-|\text{there are }\ell\text{ plates currently and we perform a plate move})=\frac{\binom{\ell}{2}}{\binom{\ell}2+1}\geq 3/4,
\end{equation}
and
\[
\Pr(P^+|\text{there are }\ell\text{ plates currently and we perform a plate move})=\frac{1}{\binom{\ell}2+1}\leq 1/4.
\]
Thus the Markov chain gives an underestimate for how often we transition from two plates to one plate. Let $N_{1,1}(t_{plate})$ be the random variable that tracks the total number of times this Markov chain returns to a state with a single plate, given that we start at a state with a single plate and a total of $t_{plate}$ plate moves have been made.

By Theorem \ref{theorem Markov chain}, we have that $\E(N_{1,1}(t_{plate}))\geq \E(t_{plate})/19$. Note that $\E(N_{1,1}(t_{plate}))\leq \E(X)$, so
\[
\E(X)\geq \E(t_{plate})/19\geq t/57.
\]

Now we explore what happens each time we transition from two plates to one plate. Consider a state in the process that currently has two plates. If the second plate currently has olives on it, then the probability that next time we make a plate move, plate $2$ still has olives is at least $1/2$. (We can immediately make the plate move.) If the second plate currently has no olives on it, then the probability there is at least one olive on it when we make the next plate move is at least $1/6$. ($1/3$ probability to add an olive, $1/2$ probability to perform a plate move once we have added the olive.) Thus we have at least a $1/6$ chance of adding an olive to the first plate each time we reduce the number of plates to $1$. Let $Y$ be a random variable that counts the number of times we add at least one olive to the first plate from a plate move, given that we transition from $2$ plates to $1$ plate $X$ times. Then
\[
\E(Y)\geq \E(X)/6.
\]

Now we put everything together. We will only consider olives on the first plate. Let $O_t^{(1)}$ denote the number of olives on plate $1$ at time $t$. Let $O^{(1)+}_t$ and $O^{(1)-}_t$ denote the total number of olives that were added to (respectively subtracted from) plate $1$ from $O^+$ (respectively $O^-$) moves. Note that $\E(O^{(1)+}_t-O^{(1)-}_t)\geq 0$ since the probability of performing an $O^+$ move is always at least the probability of performing an $O^-$ move. Finally, let $O^{plate}_t$ denote the total number of olives added to the first plate from plate moves. Then $O^{plate}_t\geq Y$. This gives us that
\beq{Ot}
\E(O_t)\geq \E(O_t^{(1)})=\E(O^{(1)+}_t-O^{(1)-}_t+O^{plate}_t)\geq \E(Y)\geq \E(X)/6\geq t/342.
\eeq

\subsection{Upper bound}
Now we bound the expected value of~$O_t$ from above. Let $O_t^+$ and $O_t^-$ are the random variables that count the number of $O^+$ moves and the number of $O^-$ moves after $t$ total moves, respectively. Clearly,
\[
O_t=O_t^+-O_t^-=t-t_{plate}-2O_t^- \le t-t_{plate}.
\]
Thus, by \eqref{equation expected number of plate moves} we conclude that
\[
\E(O_t)\leq t-\E(t_{plate})\leq 2t/3.
\]
And this proves~\eqref{eq:eot}.

\section{Concentration}\label{section concentration}

Suppose that we transition from a state with two plates to a state with a unique plate at times $t_1,t_2,\ldots,t_m$ and recall that $O_t$ denotes the number of olives at time $t$. Define $t_0:=1$. Let $X_i=O_{t_{i+1}}-O_{t_i}$. Then the $X_i$ are independent random variables and based on the previous section we have $\E(X_i)\geq 1/342$. Then $S_m:=O_{t_m}=\sum_{i=0}^mX_i$. We can argue for concentration of $S_m$ as follows.

Note that from \eqref{equation expected number of plate moves} $\E(t_{plate})\geq t/3$, and by the Chernoff bound we have for any $0\le \delta\le 1$,
\[
\Pr(t_{plate}<(1-\delta)t/3)\leq e^{\frac{-\delta^2t}6}.
\]
As we are not trying to optimize the constant $A$, we can be imprecise here and choose $\delta=1/4$, giving
\begin{equation}\label{equation plate probability}
\Pr(t_{plate}<t/4)\leq e^{-\frac{t}{96}}.
\end{equation}
If $t_{plate}\geq t/4$, then the probability that we start at a unique plate, add plates, and then return to a unique plate before $t$ moves is at least $F_{1,1}(t/4)$, which is the probability that our related Markov chain, defined in Section \ref{section expected number of olives} and studied in Section~\ref{section markov chain}, returns to $1$ at least once in the first $t/4$ moves, assuming it started at $1$. Clearly,
\[
\Pr(X_i\geq k)\leq \Pr(t_{i+1}-t_i\geq k)
\] 
and also
\[
\Pr(t_{i+1}-t_i<k)\geq F_{1,1}(k/4)-\Pr(\text{less than }k/4\text{ plate moves happen after $k$ moves}).
\]
Thus, if $k'=k/4$, then by~\eqref{tail} we get that
\begin{align}
\Pr(X_i\geq k) \leq 1-F_{1,1}(k')+e^{-\frac{k}{96}}&=\sum_{j=\lfloor\frac{k'}2\rfloor}^\infty  4\left( \frac{3}{16}\right)^{j-1} \binom{2j-3}{j-1}+e^{-\frac{k}{96}}\notag \\
&\leq 4\sum_{j=\lfloor\frac{k'}2\rfloor}^\infty \left( \frac{3}{4}\right)^{j-1}+e^{-\frac{k}{96}}\leq C'\z^{k'}+e^{-\frac{k}{96}}\leq C\r^k\label{tail1},
\end{align}
where $\z=3^{1/2}/2<1$, $\r=\max\{\z^{1/4},e^{-1/96}\}$ and $C,C'>0$ are constants. Let $\m_i=\E(X_i)$ and $\m=\m_1+\cdots+\m_m$. Note that we have
\begin{equation}\label{eq:mui}
\m_i\leq \sum_{k=1}^\infty kC\r^k = \frac{C\r}{(\r-1)^2}<\infty.
\end{equation}
Now we can easily prove a concentration result for this situation. We modify an argument from~\cite{FP}. We prove
\beq{conc1}
\Pr(|S_m-\m|\geq \d m)\leq e^{-A\d^2m}
\eeq
for some constant $A>0$. That means we have replaced \eqref{Ot} by a concentration inequality.

We write, for $\l>0$ such that $e^\l<1/\r$,
\[
\E(X_i^2e^{\l X_i})=\sum_{k=0}^\infty k^2e^{\l k}\Pr(X_i=k)\leq C\sum_{k=0}^\infty k^2(\r e^\l)^k\leq \frac{3C}{(1-\r e^\l)^3}.
\]
Now $e^x\leq 1+x+x^2e^x$ for $x\geq 0$, and so, using the above, we have
\[
\E(e^{\l X_i})\leq 1+\l\m_i+\l^2\brac{\frac{3C}{(1-\r e^{\l})^3}}< 1+\l\m_i+\l^2\brac{1+\frac{3C}{(1-\r e^{\l})^3}}.
\]
Since $\Pr(S_m\geq \m_i m+\d m) = \Pr(e^{\l S_m} \geq e^{\l (\m_i m+\d m)})$ and $X_i$s are independent, the Markov bound implies that
\begin{align*}
\Pr(S_m\geq \m_i m+\d m)&\leq e^{-\l(\m_i m+\d m)}\prod_{i=1}^m\E(e^{\l X_i})\\ 
&\leq \exp\set{-\l(\m_i m+\d m)}\cdot \brac{1+\l\m_i+\l^2\brac{1+\frac{3C}{(1-\r e^{\l})^3}}}^m\\
&\leq \exp\set{-\l(\m_i m+\d m)}\cdot \exp\set{\brac{\l\m_i+\l^2\brac{1+\frac{3C}{(1-\r e^{\l})^3}}}m}\\
&= \exp\set{-\l\d m + \l^2\brac{1+\frac{3C}{(1-\r e^{\l})^3}}m}\\
&\leq \exp\set{-\l\d m + \l^2(1+3C\e^{-3})m},
\end{align*}
where $\e=\e(\d)>0$ is a constant such that
\beq{star}
e^\l\leq (1-\e)/\r.
\eeq
Now choose $\l=\d/(2(1+3C\e^{-3}))$ and $\e$ such that \eqref{star} holds. Such a choice of $\e$ is always possible since as $\e\to 0$, $\exp\set{\d/(2(1+3C\e^{-3}))}\to 1$ and $(1-\e)/\r\to 1/\r>1$. Then
\[
\Pr\brac{S_m\geq \m+\d m} \leq
\max_{1\le i\le m}\Pr\brac{S_m\geq \m_i m+\d m)}\leq \exp\set{-\frac{\d^2m}{4(1+3\e^{-3})}}.
\]
To bound $\Pr(S_m\leq \m-\d m)$, we proceed similarly. We have $e^{-x}\leq 1-x+x^2e^x$, so
\[
\E(e^{-\l X_i})\leq 1-\l\m_i+\l^2\brac{\frac{3C}{(1-\r e^{\l})^3}}< 1-\l\m_i+\l^2\brac{1+\frac{3C}{(1-\r e^{\l})^3}}
\]
and
\begin{align*}
\Pr(S_m\leq \m_i m-\d m)&\leq e^{\l(\m_i m-\d m)} \prod_{i=1}^m\E(e^{-\l X_i})\\ 
&\leq \exp\set{\l(\m_i m-\d m)}\cdot \brac{1-\l\m_i+\l^2\brac{1+\frac{3C}{(1-\r e^{\l})^3}}}^m\\
&\leq \exp\set{\l(\m_i m-\d m)}\cdot \exp\set{\left(-\l\m_i+\l^2\brac{1+\frac{3C}{(1-\r e^{\l})^3}}\right)m}\\ 
&\leq \exp\set{-\l\d m + \l^2(1+3C\e^{-3})m},
\end{align*}
and we can proceed as before. This completes the proof of~\eqref{conc1}.

For \eqref{conc1} to be useful, we need to show that w.h.p. $m$ is linear in $t$. We condition on performing a plate move. Let the random variables $\tau_i$ for $1\leq i<\infty$ count how many times we have exactly $i$ plates after $t$ steps, where we only count when the number of plates change. So, if we are at e.g. two plates and we make three olive moves before the next plate move, we only count this as having two plates once. Then $t_{plate}=\sum_{i=1}^\infty \tau_i$ and $\tau_1=m$. Note that we can express $\tau_1$ as a sum of $\tau_2$ indicator random variables that denotes if on the $j$th time we are at two plates, we then transition to one plate. Note that the probability of such a transition is 1/2 (when we have exactly two plates, there is one way to remove a plate and one way to add a plate, giving equal probability of moving to $1$ plate vs. $3$ plates) and so $\E(\tau_1) = \tau_2/2$.

We will consider two cases based on the value of $\tau_2$. If $\tau_2\geq 3t_{plate}/19$, we have from equation \eqref{equation plate probability} and the Chernoff bound,
\begin{align*}
\Pr(\tau_1\leq t/76=(1/3)&(1/4)(3/19)t)\leq \Pr(\tau_1<\tau_2/3)+\Pr(t_{plate}<t/4)\\
&\leq e^{-\tau_2/18}+e^{-t/96}\leq e^{-t_{plate}/114}+e^{-t/96}\leq e^{-t/456}+e^{-t/96}.
\end{align*}

Now, if $\tau_2<3t_{plate}/19$ and $\tau_1<t_{plate}/19$, then $\tau_{\geq 3}:=\sum_{i=3}^\infty \tau_i\geq 15t_{plate}/19$. Now, let $L_{\geq 3}$ be the random variable that counts how many times we have at least three plates and we remove a plate. Due to~\eqref{eq:rm_plate} we have at least a $3/4$ probability of removing a plate whenever we make a plate move, so by the Chernoff bound, we have
\[
\Pr(L_{\geq 3}\leq 3\tau_{\geq 3}/5)\leq e^{-3\tau_{\geq 3}/200}\leq e^{-9t_{plate}/950}\leq e^{-9t/3800}+e^{-t/96}.
\]
Thus w.h.p. when we have at least three plates, we remove plates at least $3/5$ of the time and add them at most $2/5$ of the time. This implies that we must transition from two plates to three plates at least $\tau_{\geq 3}/5$ times to make up for the discrepancy. This implies trivially that $\tau_2\geq \tau_{\geq 3}/5\geq 3t_{plate}/19>\tau_2$, a contradiction. Thus, there exists an absolute constant $D>0$ such that 
\begin{equation}\label{no2}
m=\tau_1\geq \frac{t}{76}\text{ with probability at least }1-e^{-Dt}.
\end{equation}

Now observe that~\eqref{tail1} also implies that for $k = \log_\rho (1/Ct^2) = B\log t$ (for some constant $B>0$) we have
\begin{equation}\label{no1}
\Pr(t_{i+1}-t_i\geq k) \le C\r^k = 1/t^2
\end{equation}
and so
\begin{equation}\label{eq:log}
\Pr\left(\bigcup_{1\le i\le m} (t_{i+1}-t_i\geq k)\right) \le t \cdot  \frac{1}{t^2} = o(1).
\end{equation}
Note that between time $t_{i}$ and $t_{i+1}$ the number of olives at any plate different from the first one is at most $t_{i+1}-t_i$ and so~\eqref{eq:log} implies that w.h.p. no plate, except for the first plate has more than $B\log t$ olives at any time. Part (c) of Theorem \ref{theorem main thorem} follows directly from \eqref{eq:log}.

Now let $T = O_t - O_{t_m} = O_t - S_m$. Then, by~\eqref{tail1} we have $\Pr(T\geq k)\leq C\rho^k$ and the triangle inequality implies
\[
|O_t - \E(O_t)| = |T+S_m - \E(O_t)| \le |S_m - \mu| + |T + \mu - \E(O_t)| =  |S_m - \mu| + |T - \E(T)|.
\]
Thus,
\[
\Pr(|O_t - \E(O_t)| \ge \delta t) \le \Pr(|S_m - \mu| \ge \delta t/2)+\Pr(|T-\E(T)|\geq \d t/2).
\]
Furthermore, since $T\ge 0$ and $\E(T)=O(1)$ (cf.~\eqref{eq:mui}) we get that $\Pr(|T-\E(T)|\geq \d t/2) \le \Pr(T\geq \d t/2)$. Hence, 
\begin{align*}
\Pr(|O_t - \E(O_t)| \ge \delta t) &\le \Pr(|S_m - \mu| \ge \delta t/2)+\Pr(T\geq \d t/2)\\
&\le \Pr(|S_m - \mu| \ge \delta m/2)+\Pr(T\geq \d m/2)\\
& \le e^{-A\d^2m/4} +C\r^{\d m/2}.
\end{align*}
This together with \eqref{no2} proves Theorem \ref{theorem main thorem}(b) and this completes the proof of Theorem~\ref{theorem main thorem}.

\section{A related Markov chain}\label{section markov chain}

\begin{theorem}\label{theorem Markov chain}
	Consider a random walk on the positive integers: If we are currently at $1$, then we will move to $2$ with probability $1$. If we are currently at $2$, we will move to $1$ with probability $1/2$ and to $3$ with probability $1/2$. If we are at $k\geq 3$, we will move to $k-1$ with probability $3/4$ and to $k+1$ with probability $1/4$.
	
	Let $N_{1,1}(t)$ be the random variable that counts the number of times we return to state $1$ when we start the walk at state $1$. Then w.h.p. we have that $N_{1,1}(t)\geq t/19$.
\end{theorem}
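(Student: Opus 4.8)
The plan is to view the successive returns of the walk to state~$1$ as a renewal process: I will show that the expected time between returns is a small absolute constant (well below~$19$) and combine this with an exponential tail bound on the inter-return time to turn the law of large numbers into the claimed high-probability estimate. Concretely, the walk has downward drift at every state $k\ge 3$ (the expected increment there is $\tfrac14-\tfrac34=-\tfrac12$), hence is positive recurrent; let $\tau$ be the first return time to~$1$ started from~$1$ and let $\tau_1,\tau_2,\dots$ be independent copies of~$\tau$. Then $N_{1,1}(t)=\max\{n:\tau_1+\cdots+\tau_n\le t\}$, so for every integer~$n$ the event $\{N_{1,1}(t)<n\}$ is exactly $\{\tau_1+\cdots+\tau_n>t\}$; taking $n=\lceil t/19\rceil$, it suffices to bound $\Pr(\tau_1+\cdots+\tau_n>t)$.

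I would then pin down the law of~$\tau$. From~$1$ the walk is forced to~$2$; from~$2$ it steps to~$1$ with probability~$\tfrac12$ and otherwise makes an excursion into $\{3,4,\dots\}$ whose length is a first-passage time from~$3$ to~$2$ of the down-biased walk. Encoding those first-passage times by the generating function~$\phi$ solving $\phi(z)=\tfrac34 z+\tfrac14 z\,\phi(z)^2$, that is $\phi(z)=\dfrac{2\bigl(1-\sqrt{1-\tfrac34 z^2}\,\bigr)}{z}$, one obtains for the first-passage time from~$2$ to~$1$ the generating function $\psi(z)=\dfrac{z/2}{1-\tfrac12 z\,\phi(z)}=\dfrac{z}{2\sqrt{1-\tfrac34 z^2}}$ (using $\tfrac12 z\,\phi(z)=1-\sqrt{1-\tfrac34 z^2}$), with $\Pr(\tau=n)=[z^{n-1}]\psi(z)$. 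Extracting coefficients from the binomial expansion of $(1-\tfrac34 z^2)^{-1/2}$ gives $\Pr(\tau=2)=\tfrac12$ and $\Pr(\tau=2j)=\binom{2j-3}{j-1}\bigl(\tfrac{3}{16}\bigr)^{j-1}$ for $j\ge 2$, which by Stirling is $\Theta\bigl(j^{-1/2}(3/4)^j\bigr)$; summing the tail yields $\Pr(\tau>k)\le C\zeta^{k}$ with $\zeta=\sqrt{3}/2<1$, so in particular $\E[e^{s\tau}]<\infty$ for all small $s>0$. A one-line first-step analysis supplies the mean: if $d$ is the expected first-passage time from~$3$ to~$2$ then $d=1+\tfrac14(2d)$, so $d=2$; if $r$ is the expected first-passage time from~$2$ to~$1$ then $r=1+\tfrac12(d+r)$, so $r=4$; hence $\E[\tau]=1+r=5$ (any explicit constant below~$19$ would suffice).

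To finish: since $\E[\tau]=5<19$, the elementary renewal theorem already gives $\E[N_{1,1}(t)]\sim t/5$, and hence $\E[N_{1,1}(t)]\ge t/19$ for all large~$t$ (the form used in Section~\ref{section expected number of olives}). For the high-probability bound, take $n=\lceil t/19\rceil$; then $\E[\tau_1+\cdots+\tau_n]=5n=\tfrac{5}{19}t+O(1)$ lies below~$t$ by a fixed constant factor, so since $\E[e^{s\tau}]<\infty$ near~$0$ a Chernoff bound gives a constant $c>0$ with $\Pr(\tau_1+\cdots+\tau_n>t)\le e^{-ct}$; therefore $\Pr\bigl(N_{1,1}(t)<t/19\bigr)\le e^{-ct}$, which proves the theorem, in fact with exponentially small failure probability. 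The main obstacle is the coefficient extraction: the recursions for~$\phi$ and~$\psi$ are immediate and the closed form $\psi(z)=z/\bigl(2\sqrt{1-\tfrac34 z^2}\,\bigr)$ is clean, but carefully bookkeeping the combinatorial identity for $\Pr(\tau=2j)$—and thereby pinning the tail ratio down to exactly $\zeta=\sqrt{3}/2$—takes some care because of the irregular transition rules at the two boundary states~$1$ and~$2$; the renewal reduction and the Chernoff step are then routine.
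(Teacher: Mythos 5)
Your proof is correct, and it also exposes a numerical error in the paper's own argument. Both your generating-function computation and your one-line first-step analysis give $\E[\tau]=5$, and this is right: the paper's $f_{1,1}(2t)=4\left(\tfrac{3}{16}\right)^{t-1}\binom{2t-3}{t-1}$ carries a spurious factor of $4$. In the Catalan-convolution expansion the exponent of $\tfrac14$ should be $t-1-i$, not $t-i-2$: the $i$ excursions contain $\sum_j(a_j-1)=t-1-i$ up-steps inside their Dyck paths. With the correction the first-return law is $f_{1,1}(2)=\tfrac12$ and $f_{1,1}(2j)=\left(\tfrac{3}{16}\right)^{j-1}\binom{2j-3}{j-1}$ for $j\ge 2$, matching the coefficients of your $\psi(z)=z/\left(2\sqrt{1-\tfrac34 z^2}\right)$; a quick sanity check shows the paper's version would make $\sum_j f_{1,1}(2j)=6$ rather than $1$. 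Rerunning the paper's own Gould-identity computation without the stray factor of $4$ gives $\overline{T}_{1,1}=1+3+1=5$, not $19$. Since $5<19$, the theorem as stated is still true — indeed $N_{1,1}(t)/t\to 1/5$, and your Chernoff step yields the stronger $\Pr(N_{1,1}(t)<t/19)\le e^{-ct}$ — but $19$ is not the mean return time the paper's method actually delivers.

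Methodologically you take a genuinely different and more self-contained route. You set up the first-passage generating functions from the one-step recursions $\phi=\tfrac34 z+\tfrac14 z\phi^2$ and $\psi=\tfrac12 z+\tfrac12 z\phi\psi$, close the algebra in one line, obtain the mean by first-step analysis alone without touching the generating function, and then turn the renewal LLN into an exponential tail bound via $\E[e^{s\tau}]<\infty$. The paper instead decomposes $f_{1,1}(2t)$ on the number of visits to state $2$, applies the Catalan convolution and a Gould identity to reach a closed form, evaluates $\overline{T}_{1,1}=\sum_t\Pr(T_{1,1}\ge t)$ through two further combinatorial identities, and cites Gallager's renewal strong law for the w.h.p.\ statement. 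Your approach buys a quantitative failure probability and avoids the identities entirely; the paper's buys an explicit closed form for $f_{1,1}$, which it then reuses in the Section~\ref{section concentration} tail estimate~\eqref{tail1} (where the extra factor of $4$ is harmlessly absorbed into the constant $C$). Both prove the theorem; yours is tighter and cleaner.
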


\begin{proof}
	Notice that we cannot return to $1$ in an odd number of steps. Let $f_{1,1}(2t)$ be the probability that the first time we return to state~1 after $2t$ steps, given that we start at state~1 i.e. the probability that the \emph{first return time} is $2t$. Let $X_j$ be the location at time~$j$. So $X_0 = X_{2t} = 1$, $X_1 = X_{2t-1}=2$ and $X_j\neq 1$ for each $2<j<2t-2$. Furthermore, we need to control the number of steps at which $X_j=2$. Assume that at exactly $(i+1)$ steps we have that $X_j=2$, where $1\le i \le t-1$. That means 
	\[
	X_1 = X_{1+2a_1} = X_{1+2a_1+2a_2} = \dots = X_{1+2a_1+\dots+2a_{i}} = 2,
	\]
	where $1+2a_1+\dots+2a_{i} = 2t-1$ and $a_j\ge 1$. Hence,
	\begin{align*}
	f_{1,1}(2t)&=\Pr(X_{2t}=1,X_1\neq 1,\dots,X_{2t-1}\neq 1\mid X_0=1)\\
	&=\sum_{i=1}^{t-1}\sum_{a_1+\dots+a_{i}=t-1}\prod_{j=1}^{i}C_{a_j-1}\left(\frac{1}{2}\right)^{i+1}\left(\frac{1}{4}\right)^{t-i-2} \left(\frac{3}{4}\right)^{t-1},
	\end{align*}
	where $C_k = \frac{1}{k+1}\binom{2k}{k}$ is the \emph{Catalan number}. Now the Catalan $i$-fold convolution formula (see, e.g., \cite{R}) gives that
	\[
	\sum_{a_1+\dots+a_{i}=t-1}\prod_{j=1}^{i}C_{a_j-1} = \frac{i}{2t-i-2} \binom{2t-i-2}{t-1}.
	\]
	Thus,
	\[
	f_{1,1}(2t) = \sum_{i=1}^{t-1}  \frac{i}{2t-i-2} \binom{2t-i-2}{t-1} \left(\frac{1}{2}\right)^{i+1}\left(\frac{1}{4}\right)^{t-i-2} \left(\frac{3}{4}\right)^{t-1}
	\]
	and equivalently by replacing~$i$ by $k=t-i-1$ we get
	\[
	f_{1,1}(2t) = 4\left( \frac{3}{16}\right)^{t-1} \cdot \sum_{k=0}^{t-2}  \binom{(t-1)+k}{k} \frac{(t-1)-k}{(t-1)+k} 2^{(t-2)-k}.
	\]
	Now we apply the following identity (see, e.g., (1.12) in~\cite{Gouldv3})
	\[
	\sum_{k=0}^{n} \binom{x+k}{k} \frac{x-k}{x+k} 2^{n-k} = \binom{x+n}{n}
	\]
	with $n = t-2$ and $x = t-1$ to conclude that
	\[
	f_{1,1}(2t) = 4\left( \frac{3}{16}\right)^{t-1} \binom{2t-3}{t-1}.
	\]
	
	Consequently, the probability $F_{1,1}(t)$, given $X_0=1$, that we return to $1$ at some point in the first $t$ steps is given by
	\begin{equation}\label{tail}
	F_{1,1}(t)= \sum_{j=1}^{\lfloor\frac{t}2\rfloor} f_{1,1}(2j) = 
	\sum_{j=1}^{\lfloor\frac{t}2\rfloor}  4\left( \frac{3}{16}\right)^{j-1} \binom{2j-3}{j-1}.
	\end{equation}
	By (5.7) in~\cite{G}, we can calculate the mean time $\overline{T}_{1,1}$, to return to state $1$ after starting at state $1$ by
	\[
	\overline{T}_{1,1}=\sum_{t=0}^\infty \Pr(T_{1,1}\geq t)
	=1+\sum_{t=1}^\infty (1-F_{1,1}(t))
	=1+\sum_{t=1}^\infty\sum_{j=\lfloor\frac{t}2\rfloor+1}^\infty 4\left( \frac{3}{16}\right)^{j-1} \binom{2j-3}{j-1}.
	\]
	Now we will evaluate the latter double sum. First observe that 
	\[
	\sum_{t=1}^\infty\sum_{j=\lfloor\frac{t}2\rfloor+1}^\infty a_j = 2\sum_{j=1}^\infty ja_j - \sum_{j=1}^\infty a_j
	= 2\sum_{j=1}^\infty (j-1)a_j + \sum_{j=1}^\infty a_j
	\]
	assuming that all $a_j$s are nonnegative. Thus,
	\begin{align*}
	\overline{T}_{1,1} &= 1+8\sum_{j=1}^\infty (j-1) \left( \frac{3}{16}\right)^{j-1} \binom{2j-3}{j-1} + 4\sum_{j=1}^\infty \left( \frac{3}{16}\right)^{j-1} \binom{2j-3}{j-1}\\
	&= 1+8\sum_{k=1}^\infty k \left( \frac{3}{16}\right)^{k} \binom{2k-1}{k} + 4\sum_{k=0}^\infty \left( \frac{3}{16}\right)^{k} \binom{2k-1}{k}.
	\end{align*}
	Now we will use the following identities that can be obtained by an application of Newton's binomial theorem (see, e.g., (1.30) and (1.3) in~\cite{Gouldv2}):
	\begin{equation}\label{eq:id1}
	\sum_{k=1}^\infty k\binom{2k}{k} \left( \frac{x}{4}\right)^k = \frac{x}{2(1-x)^{3/2}}
	\end{equation}
	and
	\begin{equation}\label{eq:id2}
	\sum_{k=0}^\infty \binom{2k}{k} \left( \frac{x}{4}\right)^k = \frac{1}{\sqrt{1-x}}
	\end{equation}
	for $|x|<1$.
	Applying~\eqref{eq:id1} with $x=\frac{3}{4}$ and using $\binom{2k-1}{k} = \frac{1}{2}\binom{2k}{k}$ for $k\ge 1$ yields
	\[
	8\sum_{k=1}^\infty k \left( \frac{3}{16}\right)^{k} \binom{2k-1}{k} = 4\sum_{k=1}^\infty k \left( \frac{3}{16}\right)^{k} \binom{2k}{k}
	= 4\cdot \frac{3/4}{2(1-3/4)^{3/2}} = 12.
	\]
	Similarly, we apply~\eqref{eq:id2} to get
	\[
	4\sum_{k=0}^\infty \left( \frac{3}{16}\right)^{k} \binom{2k-1}{k} = 4 + 2\sum_{k=1}^\infty \left( \frac{3}{16}\right)^{k} \binom{2k}{k}
	= 4 + 2\left(\frac{1}{\sqrt{1-3/4}} - 1\right) = 6.
	\] 
	Consequently, 
	\[
	\overline{T}_{1,1} = 1 + 12 + 6 = 19.
	\]

	Let $N_{1,1}(t)$ be the number of times that, given starting at state $1$, we return to state $1$ in the first $t$ moves. By (5.8) in~\cite{G}, we have that w.h.p.
	\[
	\lim_{t\to\infty}N_{1,1}(t)/t=1/\overline{T}_{1,1} = 1/19.
	\]
\end{proof}

\end{document}